\documentclass[a4paper, 12pt]{article}

\usepackage{amsmath, amsthm, amssymb}
\usepackage{graphicx}
\usepackage{enumitem}
\usepackage{authblk}
\usepackage[textsize=small, textwidth=2cm, color=yellow]{todonotes}
\usepackage{thmtools}
\usepackage{thm-restate}

\usepackage[utf8]{inputenc}
\usepackage[russian, english]{babel}
\usepackage[T1]{fontenc}
\usepackage[a4paper, margin=2.5cm]{geometry}
\usepackage{needspace}

\usepackage{libertine} 
\usepackage{inconsolata} 

\usepackage[hyphens]{url}
\usepackage[linktoc = all, hidelinks, colorlinks, unicode=true, pagebackref=true]{hyperref}
\usepackage[capitalise, compress, nameinlink, noabbrev]{cleveref}
\hypersetup{linkcolor={black}, citecolor={blue!70!black}, urlcolor={blue!70!black}}

\declaretheorem[name=Theorem]{theorem}

\def\cqedsymbol{\ifmmode$\lrcorner$\else{\unskip\nobreak\hfil
\penalty50\hskip1em\null\nobreak\hfil$\lrcorner$
\parfillskip=0pt\finalhyphendemerits=0\endgraf}\fi} 

\let\le\leqslant
\let\ge\geqslant
\let\leq\leqslant
\let\geq\geqslant

\makeatletter
\def\@setthanks{\vspace{-\baselineskip}\def\thanks##1{\@par\noindent##1\@addpunct.}\thankses}
\def\@setaddresses{%
  \par\nobreak\begingroup\footnotesize\interlinepenalty\@M\bigskip
  \def\author##1{}%
  \def\address##1##2{%
    \par\addvspace\medskipamount\noindent
    \@ifnotempty{##1}{\textit{##1}: }{\ignorespaces##2}%
  }%
  \def\email##1##2{\ifvmode\else\unskip; \fi\textit{email}: ##2}%
  \addresses\par
  \endgroup
}

\title{Surfaces without quasi-isometric \\ simplicial triangulations}

\author{James Davies\thanks{
The author was supported by the Alexander von Humboldt Foundation in the framework of the Alexander von Humboldt Professorship of Daniel Kráľ endowed by the Federal Ministry of Education and Research.}}

\affil{Leipzig University, Germany}

\date{}

\begin{document}

\maketitle

\begin{abstract}
    We construct a complete Riemannian surface $\Sigma$ that admits no triangulation $G\subset \Sigma$ such that the inclusion $G^{(1)} \hookrightarrow \Sigma$ is a quasi-isometry, where $G^{(1)}$ is the simplicial 1-skeleton of $G$.
    Our construction is without boundary, has arbitrarily large systole, and furthermore, there is no embedded graph $G\subset\Sigma$ such that $G^{(1)} \hookrightarrow \Sigma$ is a quasi-isometry.
    This answers a question of Georgakopoulos.
\end{abstract}

\section{Introduction}\label{sec:intro}

It is well known that for every complete Riemannian surface $\Sigma$, there is a locally finite metric graph $G\subset \Sigma$ whose metric is induced by $\Sigma$ and so that the inclusion map $G \hookrightarrow \Sigma$ is a quasi-isometry~\cite{bonamy2023asymptotic,burago2001course,creutz2022triangulating,georgakopoulos2026triangulating,ntalampekos2023polyhedral,saucan2008intrinsic}. In fact, Ntalampekos and Romney~\cite{ntalampekos2023polyhedral} showed that $G\subset \Sigma$ can even be chosen so that $G \hookrightarrow \Sigma$ is a $(1,\epsilon)$-quasi-isometry for any $\epsilon > 0$ and
Georgakopoulos and Vigolo \cite{georgakopoulos2026triangulating} recently showed that $G\subset \Sigma$ can furthermore also be taken to be a triangulation of $\Sigma$.
In other words, the coarse geometry of Riemannian surfaces can be approximated arbitrarily well by metric graphs triangulating the surface.

Such results or similar have seen a number of applications.
For instance, Bonamy, Bousquet, Esperet, Groenland, Liu,
Pirot, and Scott \cite{bonamy2023asymptotic} used this to prove that complete Riemannian surfaces of bounded genus have asymptotic dimension at most 2. 
Maillot \cite{maillot2001quasi} used a similar result in his proof that virtual surface groups are exactly the groups quasi-isometric to a complete simply-connected Riemannian surfaces.
Ntalampekos and Romney \cite{ntalampekos2023polyhedral} used a version for length surfaces as a step in their proof that any length surface is the Gromov–Hausdorff limit of polyhedral surfaces with controlled geometry. They gave further applications of this including a new proof of the Bonk-Kleiner theorem \cite{bonk2002quasisymmetric} characterizing Ahlfors 2-regular quasispheres.

The coarse geometry of metric graphs tends to be much more complicated and harder to work with than that of simplicial graphs (where all edge lengths are equal to 1).
For instance, one can compare the proofs that minor-free (simplicial) graphs \cite{bonamy2023asymptotic} and minor-free metric graphs \cite{liu2023assouad} have asymptotic dimension at most 2.
So, it is therefore desirable to determine when metric graphs with particular properties are quasi-isometric
to simplicial graphs with analogous properties.
Such a theorem has been proven for planar metric graphs \cite{davies2025string} and very recently for minor-free metric graphs~\cite{davies2025coarse}.

Georgakopoulos \cite{394166} asked if these results for Riemannian surfaces on quasi-isometric embedded metric graphs~\cite{bonamy2023asymptotic,burago2001course,creutz2022triangulating,georgakopoulos2026triangulating,ntalampekos2023polyhedral,saucan2008intrinsic} could be improved so that there is some embedded graph or even triangulation $G\subset \Sigma$ such that $G^{(1)} \hookrightarrow \Sigma$ is a quasi-isometry, where $G^{(1)}$ is the simplicial 1-skeleton of $G$.
Recently, Georgakopoulos and Vigolo \cite{georgakopoulos2026triangulating} proved that complete Riemannian surfaces with uniform nets admit such quasi-isometric simplicial triangulations, which improved on some similar results of Maillot \cite{maillot2001quasi}.
Answering a question of Georgakopoulos and Papasoglu~\cite{georgakopoulos2023graph},
the author recently proved that complete Riemannian surfaces of bounded genus have such simplicial triangulations \cite{davies2025string}.
Bowditch \cite{bowditch2020bilipschitz} also proved that complete smooth $n$-dimensional Riemannian manifolds of bounded geometry admit quasi-isometric simplicial triangulations.
In this paper we answer Georgakopoulos's \cite{394166} question in the negative.

\begin{theorem}\label{Rie:main}
    There is a complete Riemannian surface $\Sigma$ with no triangulation $G\subset \Sigma$ such that $G^{(1)} \hookrightarrow \Sigma$ is a quasi-isometry.
\end{theorem}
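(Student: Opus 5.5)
The plan is to build $\Sigma$ as a ``thickened metric graph'' and derive a contradiction from the interaction between the topology of an embedded graph and the coarse metric. By the positive results cited above (bounded genus in \cite{davies2025string}, uniform nets in \cite{georgakopoulos2026triangulating}), $\Sigma$ has to have infinite genus and non-uniform geometry. So I would start from a locally finite metric graph $H$ whose edge lengths vary wildly, and replace each vertex by a sphere with holes and each edge by a tube. A tube has length comparable to the edge length and a very small circumference $\delta_e$. The surface is then quasi-isometric to $H$, and it is complete because the building blocks are uniformly controlled on each edge scale. I would obtain positive systole by instead giving each tube circumference bounded below and encoding the smallness in its length.

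Step one is to extract rigidity from a hypothetical triangulation $G$ for which $G^{(1)}\hookrightarrow\Sigma$ is a $(K,C)$-quasi-isometry. First, the vertices of $G$ form a $C$-net of $\Sigma$, so a long tube contains a chain of vertices spaced at most $2C$ apart. Second, adjacent vertices are at $\Sigma$-distance at most $K+C$, although the embedded edge joining them may be an arbitrarily long curve. Third, because $G$ is embedded and the surface is cut by the tubes, every simple closed curve that is essential in $\Sigma$ must meet $G$. In particular each meridian of a tube (a short loop around it) meets an edge of $G$, or for a triangulation of a separating tube, meets an edge running along it. This is where topology enters. Embeddedness makes the edges that pass through a given tube pairwise disjoint and linearly ordered around it. It also makes the edges that leave a vertex-sphere through its many tubes respect the cyclic ordering of those tubes.

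Step two is the key construction: choose a gadget in $H$ such that these constraints force a short path in $G^{(1)}$ between two points that are far apart in $\Sigma$, or force a long path between two points that are close. Concretely, I would take a sphere $S$ attached to $n$ very short tubes leading to $n$ far-apart regions $R_1,\dots,R_n$. Each $R_i$ is itself joined to the others through long tubes arranged like a high-treewidth or expander pattern, so the $R_i$ are pairwise far in $\Sigma$ but all within distance $O(1)$ of $S$. Inside $S$ the graph $G$ needs vertices within distance $C$ of every point. The $n$ short tubes force at least $n$ disjoint edge-germs entering $S$. Since $G$ is a triangulation (and, in the stronger version, any embedded graph whose complementary regions are controlled), consecutive germs around $S$ must be joined by short face boundaries. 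The result should be a cycle structure in $G$ around $S$ whose combinatorics is that of a planar disk with $n$ boundary attachments. I would then invoke a planarity and coarse-separation argument, in the spirit of the metric-to-simplicial results for planar graphs \cite{davies2025string}, to show one of two failures. Either some vertex of $G$ in $S$ is adjacent to vertices close to two far-apart regions $R_i,R_j$, so they are at distance $2$ in $G^{(1)}$, contradicting the lower quasi-isometry bound. Or the germs are separated in $G^{(1)}$ by distance growing with $n$, contradicting the upper bound. Letting the gadgets grow with $n\to\infty$ along an infinite sequence defeats every pair $(K,C)$.

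The main obstacle is step two, namely making the dichotomy airtight. An embedded edge may wander arbitrarily far, and vertices of huge degree can act as hubs, so naive counting fails. What I would try first is to use the third constraint from step one quantitatively. I would pick the tube lengths so that any edge of $G$ passing through a short tube has an endpoint that is, via the net property, within $K+2C$ of the tube. Then I would apply a Menger-type or separator argument on the planar piece $S$ to produce either the short shortcut or the long detour. Checking that the tiny circumferences (or, after rescaling, the lengths) make the vertex-net and meridian-crossing constraints compatible with completeness of $\Sigma$ is routine by comparison.
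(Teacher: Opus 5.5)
\paragraph{There is a genuine gap.} Your Step two is the whole proof, and it is not supplied. The gadget is never specified precisely, and, as you say yourself, the dichotomy is not established. More importantly, the mechanism you sketch cannot produce a contradiction.

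First, making the $R_i$ pairwise far yet all within $O(1)$ of $S$ forces $S$ to have large diameter. So $S$ is just a large planar domain, and you reduce to ``a planar disk with $n$ boundary attachments.'' That is exactly the setting where the planar results of \cite{davies2025string} \emph{construct} good simplicial graphs. For fixed constants (your $(K,C)$), any obstruction must use genus growing with those constants in an essential, quantitative way. Your expander pattern among the $R_i$ supplies genus, but the argument never uses it.

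Second, the tube (thickened-graph) model discards the one topological constraint that actually bites. In a holed sphere, curves can pass around the holes. For instance, at a vertex-sphere with four tubes, an arc from the first to the third tube and an arc from the second to the fourth can be disjoint. In a \emph{disk} with four alternating boundary arcs, they must meet. So embeddedness of $G$ forces no shared vertices in your model. Meanwhile the net and meridian conditions are met cheaply, e.g.\ by one path of roughly unit-spaced vertices along each thin tube.

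Incidentally, the worry that edges may wander arbitrarily far does not arise. The quasi-isometry condition applied to the metric graph $G^{(1)}$, edge points included, keeps every point of an edge within $M+A$ of its ends. The paper uses exactly this to confine the relevant walks.

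\paragraph{The missing idea is a pigeonhole count of topologically forced crossings.} The paper takes a $4$-regular graph $F$ of girth $\gg M^5K$ with an Eulerian decomposition into cycles $C_1,\dots,C_k$. It 2-cell embeds $F$ so that the two cycles through each vertex cross there. It then chooses a metric in which:
\begin{itemize}
\item edges of $F$ have length about $\epsilon=1/(33M^2)$;
\item the ribbon neighbourhood $D$ of $F$ is thick, with its boundary at distance $>\max(K,12M^3)$ from $F$;
\item $D$ is tree-like below the girth scale.
\end{itemize}

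Since $C_i$ is metrically short, concatenating $G^{(1)}$-geodesics between points about $1$ apart along $C_i$ gives a closed walk $C_i^*\subset D$. This walk tracks $C_i$ within $12M^3$ and has length at most $16M^2\epsilon|V(C_i)|<\frac12|V(C_i)|$. Each vertex lies on exactly two cycles, so summing over $i$ gives fewer than $|V(F)|$ vertex-occurrences on the walks. The large girth lets each occurrence be attributed to a unique nearby vertex of its cycle. By pigeonhole, some vertex $v$ of $F$ receives no attribution.

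Near $v$, however, the ribbon is a disk $D^*$. Its boundary contains, in alternating order, four curves $\rho_{e_1},\rho_{e_2},\rho_{f_1},\rho_{f_2}$ cutting across $C_1$ and $C_2$ on either side of $v$. Inside $D^*$, $C_1^*$ joins $\rho_{e_1}$ to $\rho_{f_1}$ and $C_2^*$ joins $\rho_{e_2}$ to $\rho_{f_2}$. By the Jordan curve theorem these meet, and since $G$ is embedded they meet at a vertex of $G$. That vertex is attributed to $v$, a contradiction. Gluing these compact examples for all $(M,A)$ with tubes then gives the theorem.

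So your plan lacks two things:
\begin{enumerate}
\item thick, locally disk-like regions in which coarse curves are forced to cross;
\item a density mismatch, with crossings packed at density $1/\epsilon\gg M^2$ per unit length against unit-length simplicial edges, so that forced crossing vertices outnumber the vertices available.
\end{enumerate}
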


Our constructed surfaces are without boundary and have arbitrarily large systole (the minimum length of a non-contractible loop).
In other words, the surfaces can be taken to be arbitrarily locally planar. This is perhaps surprising as it contrasts with the fact that complete Riemannian planes do admit quasi-isometric simplicial triangulations \cite{davies2025string}.
Furthermore, our construction shows that there is no such embedded graph $G\subset \Sigma$, rather than just no such triangulation.
So, the strongest version of our result is as follows.

\begin{theorem}\label{Rie:main2}
    For every $K\ge 0$, there exists a complete Riemannian surface $\Sigma$ without boundary, with systole at least $K$, and with no embedded graph $G\subset \Sigma$ such that $G^{(1)} \hookrightarrow \Sigma$ is a quasi-isometry.
\end{theorem}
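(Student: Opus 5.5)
We build $\Sigma$ so that it forces unbounded local graph complexity in a planar-like region. The guiding tension is between two facts. First, if $G^{(1)}\hookrightarrow\Sigma$ is a $(\lambda,c)$-quasi-isometry, then every edge of $G$ has bounded $\Sigma$-diameter. Second, $G^{(1)}$ must coarsely see every ``shortcut'' of $\Sigma$ while remaining embedded.

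The construction is a sequence of gadgets $\Sigma_n$ glued along long thin tubes. A tube of length $L$ and tiny circumference is allowed; the systole can still be kept $\ge K$ by making all handles large. Each gadget $\Sigma_n$ is a huge flat (or nearly flat) disc $D_n$ of radius $R_n$. Inside $D_n$ we attach very many pairs of thin handles (long tubes of length $\approx \ell_n$), connecting points of a fine grid in $D_n$ in a highly nonplanar pattern, for instance an expander or $K_{m}$-pattern on $m=m_n\to\infty$ points.

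The key is to tune the handles so that they create metric shortcuts. The handles should be short relative to the disc distances but still of length $\gg 1$, so that the metric of $\Sigma_n$ near $D_n$ approximates a nonplanar metric graph (a subdivided $K_m$ with edge lengths $\ell_n$) glued to the disc. To keep the systole large, the handle circumferences must be at least $K$, since a handle's meridian is a noncontractible loop, and any cycle through handles has length $\ge 2\ell_n\ge K$.

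The lower-bound argument runs as follows. Suppose $G\subset\Sigma$ is embedded and $f:G^{(1)}\hookrightarrow\Sigma$ is a $(\lambda,c)$-quasi-isometry. Since edges have bounded diameter $\le\lambda+c$, each handle of length $\ell_n\gg\lambda+c$ must be crossed by a path of $G$. This path has $\Theta(\ell_n)$ vertices. Moreover, the graph distance between its endpoints must be $O(\ell_n)$, matching the shortcut.

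Next we exploit genus. The handles can be traversed, but the disc $D_n$ together with its grid of $m^2$ points must realize, within $G^{(1)}$, a metric that is coarsely $K_m$-like at scale $\ell_n$ with only $O(\ell_n)$ graph steps. Meanwhile, near each attaching point the surface is locally a disc of bounded complexity. The contradiction we aim for is a counting or isoperimetric argument. A simplicial planar-like graph in a disc region of area $A$ whose vertices are pairwise $\gtrsim 1/\lambda$ apart in $\Sigma$ has $O(A)$ vertices and hence bounded ``throughput.''

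This is where the metric-versus-simplicial distinction must bite, and I expect it to be the main obstacle. Metric graphs can use one long edge to hug a region, while simplicial graphs cannot. The idea I would pursue is to make the handles very thin in one direction but attached along long slits. Their meridian length is large (giving a large systole), yet their area is tiny relative to length. A simplicial graph path through the handle then needs vertices spaced $\ge$ some $\epsilon$, which forces $\gtrsim\ell_n$ edges. Far more vertices are needed in aggregate, however: embeddedness forces a separate path through each of the many parallel handles attached in a small disc of area $a$. That gives $\gg a$ vertices packed into a region whose $\Sigma$-metric is small, so some vertices are very close in $\Sigma$ yet far in $G^{(1)}$.

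Such vertices contradict the lower quasi-isometry bound unless they are joined by short $G$-paths. Those short paths in turn contradict planarity of the local disc by a grid-minor/separator argument (Lipton–Tarjan style) at scale $n$. Letting $n\to\infty$ over disjoint gadgets kills every $(\lambda,c)$ simultaneously. Completeness follows since the gadgets are compact and joined by tubes going to infinity.
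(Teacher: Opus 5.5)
There is a genuine gap. The step that should produce the contradiction is never carried out, and the mechanisms you sketch for it fail.

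\begin{itemize}
\item A $(\lambda,c)$-quasi-isometry gives no lower bound on the $\Sigma$-separation of vertices: $d_{G^{(1)}}(x,y)\ge d(x,y)/\lambda-c$ is vacuous for nearby points. So ``vertices pairwise of order $1/\lambda$ apart, hence $O(\text{area})$ vertices'' is unjustified.
\item Many vertices of $G$ packed into a region of small $\Sigma$-diameter need only be at bounded $G^{(1)}$-distance from one another. An embedded graph achieves this with a planar hub: one vertex joined, by edges of bounded $\Sigma$-diameter, to a vertex inside each tube mouth. So nothing is ``very close in $\Sigma$ yet far in $G^{(1)}$''.
\item Separator theorems give no lower bound on the diameter of a planar graph (stars, wheels), so there is no Lipton--Tarjan contradiction either.
\item A nonplanar pattern of handles is not an obstruction by itself. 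Each tube of circumference at least $K$ is followed by a simplicial path with vertices about unit distance apart. By \cite{davies2025string}, each gadget admits a quasi-isometric simplicial triangulation with constants depending only on its genus. You would therefore need a quantitative local mechanism forcing the constants to blow up, and your flat-disc-with-handles gadget does not supply one.
\item Tubes of tiny circumference contradict systole at least $K$, since their meridians are non-contractible. You later concede this yourself.
\end{itemize}

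The missing idea, which the paper supplies, is a region where an embedded graph is forced to contain \emph{distinct} vertices at a density per unit length exceeding what the upper quasi-isometry bound allows.
\begin{itemize}
\item \textbf{Construction.} Take a 4-regular graph $F$ of girth at least $3400M^5K$ and split it into edge-disjoint cycles crossing at every vertex. Embed it in a surface with a metric in which edges of $F$ have length about $\epsilon=1/(33M^2)$. A neighbourhood $D$ of $F$ has width exceeding $12M^3$ and is locally a thickened tree at scale $\epsilon$.
\item \textbf{Short tracking walks.} A closed walk $C_i^*$ in $G^{(1)}$ tracking a cycle $C_i$ stays inside $D$. It has length less than $|V(C_i)|/2$, because it uses only $O(M^2)$ edges per unit $\Sigma$-length.
\item \textbf{Forced crossings.} At each vertex $v$ of $F$, the walks tracking the two cycles through $v$ must cross inside a disc around $v$ by the Jordan curve theorem. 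Since $G$ is embedded, they share a vertex of $G$ there. The tree structure, via the projection $p_{C_i}$, charges that vertex to $v$ and to no other vertex of $F$.
\item \textbf{Counting.} Every vertex of $F$ lies on exactly two cycles, so the total number of charges is less than $|V(F)|$. Pigeonhole then gives a contradiction.
\end{itemize}

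Your final step, one gadget per pair of constants joined by tubes, matches the paper's. The tubes must, however, be thick (for the systole) and attached away from the region where the local argument takes place.
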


Since complete Riemannian surfaces do admit quasi-isometric triangulations by metric graphs \cite{georgakopoulos2026triangulating} (or even just have quasi-isometric embedded graphs \cite{bonamy2023asymptotic,burago2001course,creutz2022triangulating,georgakopoulos2026triangulating,ntalampekos2023polyhedral,saucan2008intrinsic}), these results are also another good demonstration of how the coarse structure of metric graphs tends to be more complex than that of simplicial graphs.

To prove \cref{Rie:main2}, we will actually prove a version that for any fixed $M,A$ gives compact Riemannian surfaces of bounded genus that have no such $(M,A)$-quasi-isometry (see \cref{Rie:main3}).
In \cref{sec:conclude} we discuss the resulting bounds on the genus of the surfaces constructed in \cref{Rie:main3} in comparison with the results of \cite{davies2025string} that bounded genus complete Riemannian surfaces do admit quasi-isometric simplicial triangulations.
In the next section, we prove \cref{Rie:main2}.

\section{Proof}\label{sec:proof}

Before proving \cref{Rie:main2}, we introduce some necessary preliminaries.

For~$M, A \ge 0$ with $M \geq 1$, an \emph{$(M,A)$-quasi-isometry} from a metric space~$X$ to another metric space~$Y$ is a map~$f: X \to Y$ such that
\begin{enumerate}
    \item $M^{-1} d_X(u, v) - A \leq d_Y(f(u),f(v)) \leq M  d_X(u,v) + A$ for every~$u, v \in X$, and
    \item for every $y\in Y$, there exists some $x\in X$ with $d_Y(y,f(x)) \leq A$.
\end{enumerate}
We say that $X$ is \emph{$(M,A)$-quasi-isometric} to $Y$ if there exists an $(M,A)$-quasi-isometry from $X$ to $Y$.
A \emph{quasi-isometry} is just a $(M,A)$-quasi-isometry for some $M,A\ge 0$ with $M\ge 1$, and we say that 
two metric spaces $X$ and $Y$ are \emph{quasi-isometric} if there is a quasi-isometry between $X$ and $Y$.

A \emph{Riemannian surface} is a surface $\Sigma$ together with a \emph{Riemannian metric} $d_\Sigma$ defined by a scalar product on the tangent space of every point.
A Riemannian surface $\Sigma$ is \emph{complete} if the metric space $(\Sigma, d_\Sigma)$ is complete.
Note that compact Riemannian surfaces are also complete.
The \emph{systole} of a Riemannian surface is equal to the infimum of the lengths of its non-contractible loops.
For more on Riemannian surfaces, see \cite{spivak1979comprehensive}.

In our construction we shall start by constructing some surface and then choose some suitable (Riemannian) metric on the surface.
We give a sketch of a standard way of constructing a Riemannian surface starting with a smooth surface $\Sigma$ and a locally finite triangulation of $H \subset \Sigma$ by a metric graph $H$ such that for each triangular face, the length of the longest side is less than the sum of the other two sides.
This will just be one way to see in the proof of \cref{Rie:main2} that the metric we choose can further be chosen to be a Riemannian metric.
Such smoothing arguments are standard appear for instance in \cite{reshetnyak2023isothermal}.

From such a locally finite triangulation $H\subset \Sigma$, one can equip each triangular face with the standard Euclidean metric on the triangle with sides of length equal to that of the triangular face.
This gives a complete piecewise Euclidean metric $d_E$ on $\Sigma$, which is singular only at vertices of $H$.
Then for each vertex $v$ of $H$, we can choose some $0<\epsilon_v < 1$ so that the closed disk $E_v$ around $v$ of radius $\epsilon_v$ is contained in the Euclidean triangles incident to $v$ and so that $\epsilon_v$ is less than half the length of any edge incident to $v$. This ensures that the disks are pairwise disjoint.
Then, each such disk can be replaced with a Euclidean disk $D_v$ whose boundary is of equal length to that of $E_v$ and is attached isometrically along the boundary of $E_v$.
Finally, each of these boundaries can be smoothed to obtain a quasi-isometric Riemannian metric $d_\Sigma$.
For any given $\epsilon >0$, we can make this a $(1,\epsilon)$-quasi-isometry by choosing each $\epsilon_v$ to be sufficiently small.





We require some more graph theoretic notation.
We denote the vertex set of a graph $G$ by $V(G)$.
For a vertex $u$ of a graph $G$, we let $N_G(u)$ denote the \emph{neighbourhood} of $u$ in $G$ (the vertices of $G$ adjacent to $u$).
For a set of vertices $A$ of a graph $G$ and a positive real $t$, we let $N^t_G[A]$ be the vertices at distance at most $t$ from $A$ in $G$.
If $A=\{u\}$, then we simply use $N^t_G[u]$.
We denote the vertex set of a graph $G$ by $V(G)$.

To construct the Riemannian surfaces as in \cref{Rie:main2}, we require some finite graphs with useful properties that will embed naturally into our constructed surface.
The \emph{girth} of a graph is equal to the length of its shortest cycle.
A graph is \emph{$k$-regular} if every vertex of $G$ has degree~$k$.
We require finite 4-regular graphs of large girth as constructed (probabilistically) by Erd{\H{o}}s and Sachs~\cite{erdos1963regulare}. For explicit constructions, see \cite{lubotzky1988ramanujan,margulis1982explicit}.

\begin{theorem}[Erd{\H{o}}s, Sachs~\cite{erdos1963regulare}]\label{rie:lem:4-regular}
    There exists finite 4-regular graphs with arbitrarily large girth.
\end{theorem}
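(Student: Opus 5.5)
The plan is the classical Erd\H{o}s--Sachs counting argument: start with a large 4-regular graph and delete edges that lie on short cycles.

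Fix a girth target $g\ge 3$. Write $N(n)$ for the number of perfect matchings of $[2n]$, that is, $N(n)=(2n-1)!!$. Let $n$ be large and even.

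\textbf{Step 1: random model.} Use the configuration model. Take $n$ vertices, each carrying $4$ half-edges, and choose a uniformly random perfect matching of the $4n$ half-edges. This gives a random $4$-regular multigraph $G_n$.

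\textbf{Step 2: count short cycles.} For each $\ell< g$, let $X_\ell$ be the number of cycles of length $\ell$ in $G_n$, with loops counting as $1$-cycles and double edges as $2$-cycles. A cycle of length $\ell$ is determined by an ordered sequence of $\ell$ distinct vertices, taken up to rotation and reflection, together with a choice of two half-edges at each vertex. So there are at most $n^\ell\cdot 12^\ell$ candidate cycles. Each candidate appears with probability about $(4n)^{-\ell}$, since $\ell$ specified pairs must be matched. Hence
\[
\mathbb{E}[X_\ell]\le C\,3^\ell,
\]
which is bounded independently of $n$. Therefore the total number $X=\sum_{\ell<g}X_\ell$ of short cycles satisfies $\mathbb{E}[X]\le C_g$. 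By Markov's inequality, with probability at least $1/2$ we have $X\le 2C_g$.

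\textbf{Step 3: repair.} Fix an outcome with $X\le 2C_g$. Delete one edge from each short cycle; this removes at most $2C_g$ edges and leaves a graph of girth at least $g$. It is no longer regular: at most $4C_g$ vertices now have degree below $4$.

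To restore regularity, take two disjoint copies of this graph. If the deficient vertices are far apart, join each deficient vertex in one copy to its twin in the other copy by new edges. A new cycle must then cross between the copies at least twice, at two distinct deficient vertices. Its length is therefore at least twice the distance between those vertices. I also need every cycle through a new edge to stay long, and this will follow if the deficient vertices are pairwise at distance at least $g$.

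\textbf{Step 4: keep the deficient vertices apart (main obstacle).} The difficulty is ensuring that the deleted edges are far apart, so that the deficient vertices are pairwise distant. I would handle this by a second expectation bound. The expected number of pairs of short cycles lying within distance $g$ of each other is $O(1/n)$, because such a pair forms a connected subgraph with more edges than vertices. So with high probability all short cycles are pairwise far apart and Markov's bound still holds, and the doubling step then works.

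If that bookkeeping proves awkward, I would instead cite an explicit algebraic construction, which directly gives $4$-regular graphs of arbitrarily large girth: the Cayley graphs of $\mathrm{SL}_2(\mathbb{F}_p)$ with respect to two free generators, as in Margulis and in Lubotzky--Phillips--Sarnak.
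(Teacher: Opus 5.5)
Your Steps 1, 2 and 4 are sound, but the doubling repair in Step 3 fails, and Step 4 cannot rescue it.

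\textbf{The gap.} When you delete an edge $uv$ from a short cycle $C$ of length $\ell$ with $2\le\ell<g$, \emph{both} ends $u,v$ become deficient. They are still joined by the path $C-uv$ of length $\ell-1\le g-2$. So your condition that the deficient vertices be pairwise at distance at least $g$ can never hold. Step 4 separates distinct short cycles from one another, not the two ends of the same deleted edge.

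The conclusion itself fails, not just your sufficient condition. In the doubled graph, the path $C-uv$, the cross edge $vv'$, the twin path $C'-u'v'$ and the cross edge $u'u$ form a cycle of length $2(\ell-1)+2=2\ell$. Similarly, deleting a loop at $u$ leaves $u$ with deficiency $2$, and joining $u$ twice to $u'$ creates a $2$-cycle. Your first-moment bound only bounds $X$; it cannot give $\Pr(X=0)>0$. Triangles and double edges occur with probability bounded away from zero, so the outcome you fix may contain one, and then your final graph has girth at most $6$.

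\textbf{A fix close to your idea: iterate the doubling.} Since distinct short cycles are at distance at least $g$, any $u$--$v$ path in $G-uv$ other than $C-uv$ has length at least $g-1$. Using this, one checks three things after one round:
\begin{enumerate}
  \item the graph is again $4$-regular;
  \item its short cycles are exactly the new $2\ell$-cycles, one for each old short cycle with $2\ell<g$;
  \item these are still pairwise at distance at least $g$.
\end{enumerate}
Each round doubles every surviving short-cycle length, so after $\lceil\log_2 g\rceil$ rounds no short cycle remains.

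\textbf{An alternative fix: a switching instead of doubling.} For each short cycle choose an edge $u_iv_i$ on it, and an edge $x_iy_i$ on no short cycle, with all these sites pairwise at distance at least $g$. This is possible for large $n$, since there are $O_g(1)$ sites. Replace $u_iv_i,x_iy_i$ by $u_ix_i,v_iy_i$; for a loop at $u_i$, use $u_ix_i,u_iy_i$. A short cycle through $u_ix_i$ would have to return through $v_iy_i$. It would then contain an $x_i$--$y_i$ path avoiding $x_iy_i$, which has length at least $g-1$, a contradiction.

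\textbf{Comparison with the paper.} The paper gives no proof of this statement. It cites Erd\H{o}s--Sachs and the explicit constructions of Margulis and Lubotzky--Phillips--Sarnak. Your fallback of citing the algebraic construction therefore matches the paper's treatment, but your self-contained probabilistic argument needs one of the repairs above.
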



We are now ready to prove a version of \cref{Rie:main2} with fixed constants in the considered quasi-isometry.
In this case, we actually construct compact bounded genus surfaces as will be discussed further in \cref{sec:conclude}.

\begin{theorem}\label{Rie:main3}
    For every triple of non-negative reals $K,M,A$ with $M\ge 1$, there exists a compact Riemannian surface $\Sigma$ without boundary, of bounded genus, with systole at least $K$, and with no embedded graph $G\subset \Sigma$ such that $G^{(1)} \hookrightarrow \Sigma$ is a $(M,A)$-quasi-isometry.
\end{theorem}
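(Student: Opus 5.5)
The plan is to build $\Sigma$ from a finite $4$-regular graph $H$ of large girth, given by \cref{rie:lem:4-regular}, and then to get a contradiction from an assumed $(M,A)$-quasi-isometric embedded graph $G$. The contradiction should combine two facts: $G$ is simplicial and embedded, so its edges cannot cross, while $\Sigma$ has metric shortcuts whose graph-theoretic counterparts would have to cross.

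\emph{Construction.} Fix $K,M,A$ and choose a scale $D\gg M,A,K$. Start from a large flat region, say a round sphere or a flat torus of diameter much larger than $D\cdot|V(H)|$. Place the vertices of $H$ as points $p_v$ pairwise at distance at least $D$. For each edge $uv$ of $H$, attach a handle, i.e.\ a thin tube, joining small disks around $p_u$ and $p_v$. Each tube has length about $1$ and circumference $\delta\ll 1/(MA)$. The result is a compact surface whose genus depends only on $|E(H)|$, and hence only on $M,A$. To make the metric Riemannian, triangulate the whole surface by a metric graph whose faces satisfy the strict triangle inequality, and apply the smoothing procedure described before \cref{rie:lem:4-regular}. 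This changes distances by at most an additive $\epsilon$.

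\emph{Systole.} Every non-contractible loop either runs along a cycle of handles, and then has length at least roughly the girth of $H$ times $D$, or it crosses a handle between two far-apart mouths, which costs at least $D$. A loop around a single tube has length only $\delta$, so I would double each tube into a long flared neck of circumference at least $K$ near its mouths. Alternatively, I would replace the thin tubes by tubes of circumference $K$ that are much shorter than $D$; this still gives a huge shortcut. Choosing $D$ and the girth of $H$ large then gives systole at least $K$.

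\emph{Obstruction.} Suppose $G\subset\Sigma$ and $G^{(1)}\hookrightarrow\Sigma$ is an $(M,A)$-quasi-isometry. The argument has three steps.
\begin{enumerate}[label=(\roman*)]
\item Vertices of $G$ are $A$-dense. Every edge of $G$ has $\Sigma$-length at most $M+A$, measured by the distance between its endpoints, but the edge itself may be an arbitrarily long curve.
\item Points near $p_u$ and points near $p_v$ are at $\Sigma$-distance $O(1)$ for every edge $uv\in E(H)$. So $G$ has a path of $O(MA)$ edges between them, and this path is quasi-geodesic in $\Sigma$. Because the flat distance is at least $D$, large girth forces such paths, after coarsening, to traverse the handle of $uv$ or its neck.
\item Among the roughly $|V(H)|$ paths from (ii) whose endpoints lie in a common flat neighbourhood, a crossing or planarity count shows that they cannot all be realised disjointly in the planar part of $\Sigma$. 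The relevant part of $\Sigma$ is planar because the handles have bounded total genus compared with the number of forced routings, since $H$ has large girth and is $4$-regular. The aim is then to find two such paths that must share a vertex $w$ of $G$. That sharing creates a $G$-shortcut of length $O(MA)$ between two points at $\Sigma$-distance about $D$, contradicting the lower bound $d_G\ge d_\Sigma/M - A$.
\end{enumerate}

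\emph{Main obstacle.} The hard step is (iii). I have to turn the topological constraint (edges of $G$ are embedded and cannot cross) into a metric contradiction, even though edges of $G$ may be long curves and only their endpoints are metrically controlled. I would first try to prove a local lemma: in any disk of radius $r\gg MA$ in the flat part, the number of "independent short connections" that $G$ can realise through the disk boundary is bounded by a function of $M,A$ alone. The tool would be a Menger-type argument on $G$ restricted to the disk, combined with the fact that the simplicial ball $N^t_G[w]$ has bounded $\Sigma$-diameter. I would then choose $H$ with girth and size exceeding this bound.
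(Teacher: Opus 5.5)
\textbf{There is a genuine gap, and it is in the construction itself, not only in step (iii): your surface is not a counterexample.}

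Put a graph of mesh about $1$ on the flat part, adapted near the tiny clusters of mouths, and on each tube. Then add one arc through each handle, from mouth to mouth. Distinct handles are disjoint, so these arcs cross nothing, and the result is an embedded graph $G$. Every edge has $\Sigma$-diameter $O(1)$. Every $\Sigma$-geodesic splits into flat stretches, each shadowed by a grid path of comparable length, and handle traversals, each costing at least about $1$ in $\Sigma$ and $O(1)$ edges of $G$. Hence $G^{(1)}\hookrightarrow\Sigma$ is a quasi-isometry with absolute constants, independent of $H$, $D$ and $\delta$. So for $M,A$ above those constants, no argument can complete (iii).

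Concretely:
\begin{enumerate}[label=(\alph*)]
\item The short paths from (ii) each run through their own handle and are never forced to meet.
\item Your assertion that ``the relevant part of $\Sigma$ is planar'' is backwards, since the handles are exactly where the routing happens.
\item Your proposed local lemma has nothing to act on, because a flat disk of radius $r\ll D$ meets at most one cluster of four mouths.
\end{enumerate}
Metric shortcuts through disjoint handles are simply not an obstruction.

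\textbf{The missing idea is \emph{compression}:} a region where topology forces more crossings than a simplicial graph with bounded constants has vertices to pay for. The paper's construction and argument go as follows.
\begin{enumerate}[label=(\arabic*)]
\item It splits the large-girth $4$-regular graph $F$ into edge-disjoint cycles $C_1,\ldots,C_k$ and embeds $F$ so that the two cycles through each vertex cross there.
\item It chooses the metric so that edges of $F$ have length between $\epsilon/2$ and $\epsilon$, with $\epsilon=1/(33M^2)$. This happens inside a neighbourhood of $F$ that is locally a thickened tree and whose boundary is at distance more than $\max(K,12M^3)$ from $F$.
\item From $d_{G^{(1)}}\le M d_\Sigma+A$, each $C_i$ is shadowed by a closed walk $C_i^*$ of $G^{(1)}$ of length less than $|V(C_i)|/2$. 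Since every vertex of $F$ lies on exactly two of the cycles, the walks have fewer than $|V(F)|$ vertex occurrences in total.
\item Each occurrence is assigned to a nearby vertex of $F$ on its cycle; this is well defined thanks to the girth. By pigeonhole, some vertex $v$ of $F$ receives nothing.
\item In a disk around $v$, the two shadow walks join interleaved boundary arcs, so by the Jordan curve theorem they meet. As $G$ is embedded, they meet at a vertex of $G$, which would have been assigned to $v$.
\end{enumerate}
So the contradiction comes from counting, not from a shortcut violating $d_{G^{(1)}}\ge d_\Sigma/M-A$.

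Also, in the paper $G^{(1)}$ carries its unit-length metric with edge points included. So every point of an edge lies within $M+A$ of its ends, and the ``arbitrarily long edge'' concern you flag does not arise.
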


\begin{proof}
    Fix $K\ge M\ge A \ge 1$, and set $\epsilon = 1/(33M^2)$, and $g=3400M^5K$.

    Let $F$ be some finite connected 4-regular (simplicial) graph of girth at least $g$ as given by \cref{rie:lem:4-regular}.
    Since $F$ is 4-regular, it is Eulerian, and is therefore the union of some collection of edge-disjoint cycles $C_1, \ldots , C_k$.
    Note that every vertex $v$ of $F$ is contained in exactly two of these cycles, say $C_v$ and $C_v'$.
    Now, take a 2-cell embedding of $F$ into some smooth surface $\Sigma$ such that for every vertex $v$ of $F$, the two cycles $C_v$ and $C_v'$ cross at $v$ in the embedding (note that such a surface and embedding can be found by first fixing such a clockwise ordering of incident edges for each vertex, and then adding in 2-cells to create the faces).
    Note that $\Sigma$ has bounded genus and is compact since $F$ is a finite graph with a 2-cell embedding into $\Sigma$.
    We remark that $\Sigma$ is also orientable.
    We still need to choose a Riemannian metric $d$ on $\Sigma$.

    \begin{figure}
    \centering
    \includegraphics[width=0.52\linewidth]{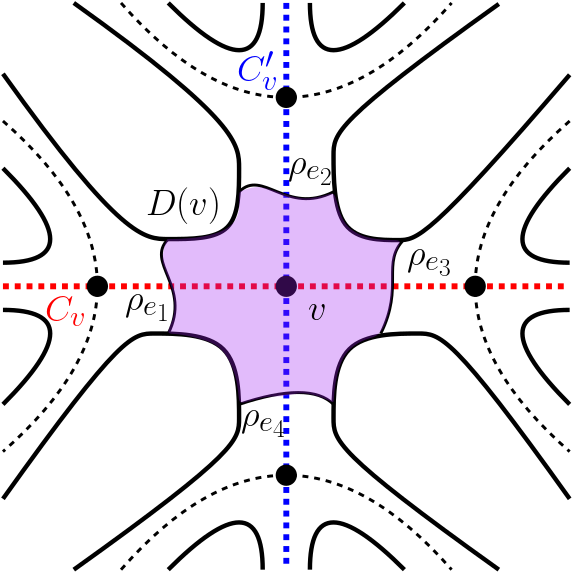}
    \caption{The embedded graph $F$ consists of the dashed edges and the boundary of $D$ is the thick black edges around $F$.
    For the centre vertex $v$, we illustrate the two cycles $C_v$ and $C_v'$ of the Eulerian decomposition that cross at $v$ in red and blue. Also featured are the curves $\rho_{e_1}, \rho_{e_2}, \rho_{e_3}, \rho_{e_4}$ crossing their corresponding edges incident to $v$. Together with part of the boundary of $D$, they bound the subset $D(v)$ of $D$, which is purple in the figure.
    }
    \label{fig1}
\end{figure}

    Now we take some small neighbourhood $D\subset \Sigma$ around $F$ in the embedding so that each face of $F$ in the embedding contains one of the boundary components of $D$.
    For each edge $e$ of $F$, we draw a curve $\rho_e$ in $\Sigma$ between boundary points of $D$ near the midpoint of $e$ and crossing $e$ as in \cref{fig1}.
    For each vertex $v$ of $F$, we let $D(v)$ be the subset of $D$ containing $v$ and bounded by a subset of the boundary of $D$ and $\rho_{e_1} \cup \rho_{e_2} \cup \rho_{e_3} \cup \rho_{e_4}$, where $e_1,e_2,e_3,e_4$ are the four edges of $F$ incident to $F$ (see \cref{fig1} for an illustration).
    More generally, for $X\subseteq V(F)$, we let $D(X)=\bigcup_{x\in X} D(x)$.

    Choose some Riemannian metric $d$ on $\Sigma$ such that 
    \begin{itemize}
        \item for each edge $uv$ of $F$, its ends $u$ and $v$ in the embedding are at distance between $\epsilon / 2$ and $\epsilon$ in $\Sigma$,
        \item the distance between $F$ (in its embedding in $\Sigma$) and the boundary of $D$ is greater than both $K$ and $12M^3$, and
        \item for every $v\in V(F)$ and $r\ge 0$, the points of $D$ at distance at most $r$ from $v$ in $\Sigma$ are contained in $D(N_F^{1+r/\epsilon } [v])$.
    \end{itemize}
    Such a Riemannian metric $d$ on $\Sigma$ can be chosen by starting with a suitable triangulation of $\Sigma$ by a metric $H$ such that $F\subset H \subset \Sigma$ (where the edges of $F$ have length slightly less than $\epsilon$ in $H$) and smoothing the resulting piecewise Euclidean metric as discussed.
    Clearly $\Sigma$ has systole at least $K$ with this choice of Riemannian metric since $F$ is a 2-cell embedding of $\Sigma$ and $F$ has girth at least $3400M^5K\ge 100M^3 K / \epsilon$. 
    
    Suppose now for the sake of contradiction that there exists an embedded graph $G\subset \Sigma$ such that $G^{(1)} \hookrightarrow \Sigma$ is a $(M,A)$-quasi-isometry.
    So, every point of $\Sigma$ is at distance at most $A$ from a point of $G$ in the embedding, the points on every edge are within distance at most $M+A\le 2M$ from each of the edges ends in $\Sigma$, and for any vertices $x,y\in V(G^{(1)})\subset \Sigma$, we have that
    \[
    \frac{1}{M}d(x, y) - A \le d_{G^{(1)}}(x,y) \le Md(x,y) + A.
    \]
    Note furthermore that every point of $\Sigma$ is at distance at most $2M+A\le 3M$ from a vertex of $G^{(1)}$ in the embedding.

    \begin{figure}
    \centering
    \includegraphics[width=0.74\linewidth]{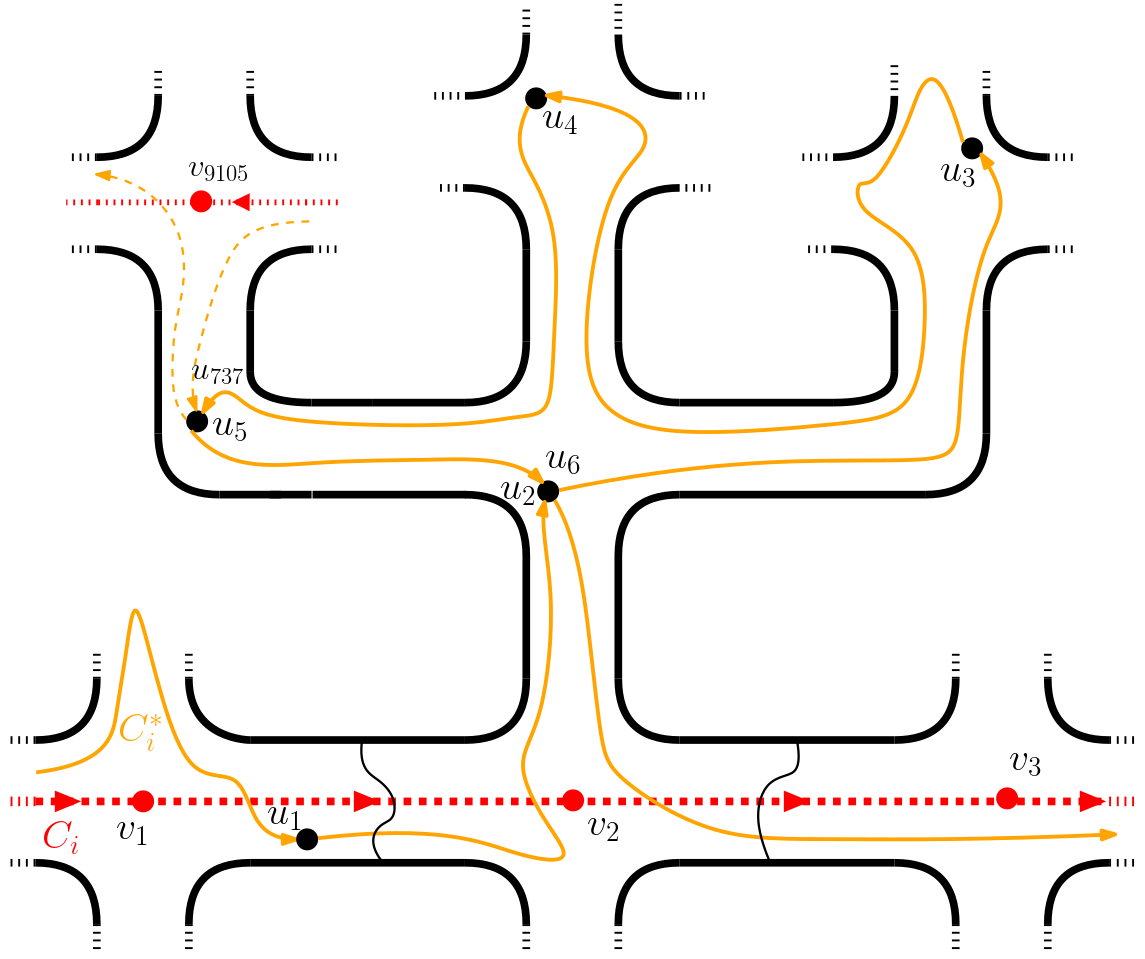}
    \caption{An illustration of (two parts of) the of closed walk $C_i^*$ (orange) of $G^{(1)}$ and $p_{C_i}$ given the cycle $C_i$ (red) of the Eulerian decomposition of $F$.
    We have that $p_{C_i}(u_1)=v_1$, $p_{C_i}(u_2)= p_{C_i}(u_3) = p_{C_i}(u_4) = p_{C_i}(u_5) = p_{C_i}(u_6) = v_2$, and $p_{C_i}(u_{737})=v_{9105}$.
    }
    \label{fig2}
\end{figure}

    Consider one of the cycles $C_i$ in the Eulerian decomposition $C_1, \ldots, C_k$ of $F$.
    We now aim to find a closed walk $C_i^*$ (it needs not be a cycle) of $G^{(1)}$ that closely follows $C_i$ in $\Sigma$ and that is far shorter than $C_i$.
    The reader may wish to refer to \cref{fig2}.
    Choose vertices $c_{i,1}, \ldots , c_{i,\ell_i}$ in order on $C_i$ so that $\frac{1}{2} \le d(c_{i,j},c_{i,j+1}) \le 1$ for each $1\le j \le \ell_i$ (taking $c_{i,\ell_i+1}=c_{i,1}$).
    Note that this can be done since $C_i$ has length at least $g=3400M^5K\ge 2 + 1/\epsilon$.
    As the ends of edge of $C_i$ are at distance at most $\epsilon$ in $\Sigma$, we further have that $\ell_i \le  2|C_i|\epsilon$.
    Now, for each $1\le j \le \ell_i$, let $c_{i.j}^*$ be a vertex of $G^{(1)}$ at distance at most $M+2A\le 3M$ from $c_{i,j}$ in $\Sigma$.
    For each $1\le j \le \ell_i$, we have that $d(c_{i,j}^*,c_{i,j+1}^*) \le d(c_{i,j}^*,c_{i,j}) + d(c_{i,j},c_{i,j+1}) + d(c_{i,j+1},c_{i,j+1}^*) \le  3M+1+3M\le 7M$.
    Therefore, $G^{(1)}$ contains a path $P_{i,j}$ between $c_{i,j}^*$ and $c_{i,j+1}^*$ of length at most $7M^2+A \le 8M^2$.
    Every point of $P_{i,j}$ in $\Sigma$ is at distance at most $8M^3+A\le 9M^3$ from $c_{i,j}^*$ in $\Sigma$, and therefore at distance at most $9M^3 + 3M \le 12M^3$ from $c_{i,j}$ in $\Sigma$.
    Note that this further implies that every such point is contained in $D(N^{400M^5}_{F}[c_{i,j}])$ (and in particular, contained in $D$) since $1 + 12M^3 / \epsilon \le 1+396M^5 \le  400M^5$.
    Let $C_i^*$ be the closed walk in $G^{(1)}$ obtained by concatenating the walks given by the paths $P_{i,1},\ldots , P_{i,\ell_i}$ in order.
    Then, $C_i^*$ is contained in $D$, and furthermore, the length of the closed walk $C_i^*$ is at most $8M^2 (2|V(C_i)|\epsilon) = 16M^2 |V(C_i)|\epsilon < \frac{1}{2}|V(C_i)|$, as $\epsilon = 1/(33M^2)$.

    We further examine $C_i$ and $C_i^*$.
    Each vertex $u$ along the closed walk $C_i^*$ belongs to a subwalk given by some $P_{i,j}$ and is distinct from its end vertex.
    For each such $u$ along the closed walk $C_i^*$, let $c_{i,u}$ be the vertex $c_{i,j}$ of $C_i$ such that $c_{i,j}^*$ is the corresponding starting vertex of $P_{i,j}$.
    Since $u$ is contained in $D(N^{400M^5}_{F}[c_{i,u}])$, it follows that $u$ is contained in $D(N^{800M^5}_{F- N_{C_i}(v)}[v])$ 
    for some $v$ at distance at most $840M^5$ from $c_{i,u}$ in $C_i$.
    Such a vertex $v$ is unique since the induced subgraph of $F$ with vertex set $N^{1640M^5}_F[c_{i,u}]$ is a tree due to the girth of $F$ being at least $3400M^5$.
    (We point out that although $800M^5$ and $840M^5$ here could clearly be improved to $400M^5$, it is this uniqueness why we relax things to $800M^5$ and $840M^5$).
    We define $p_{C_i}(u)=v$ as above for each such vertex $u$ along the closed walk $C_i^*$ of $G^{(1)}$ (note that the closed walk might possibly repeat vertices with $u=u'$ where $u'$ comes after $u$ in the closed walk, however $p_{C_i}(u)$ and $p_{C_i}(u')$ can still be distinct by slight abuse of notation).

    For an illustrative example of the function $p_{C_i}$, see \cref{fig2}.
    One can intuitively think of $p_{C_i}$ as being a function mapping vertices of the closed walk $C^*_i$ to their nearest vertex of $C_i$ in $\Sigma$ that is also close the part of $C_i$ that the portion of the closed walk $C_i^*$ is supposed to be following (we must take care here since there might be vertices much further along $C_i$ that are closer in $\Sigma$ to the given vertex of the closed walk $C_i^*$ than the vertices of $C_i$ that this portion of the closed walk is supposed to be following).


    \begin{figure}
    \centering
    \includegraphics[width=0.60\linewidth]{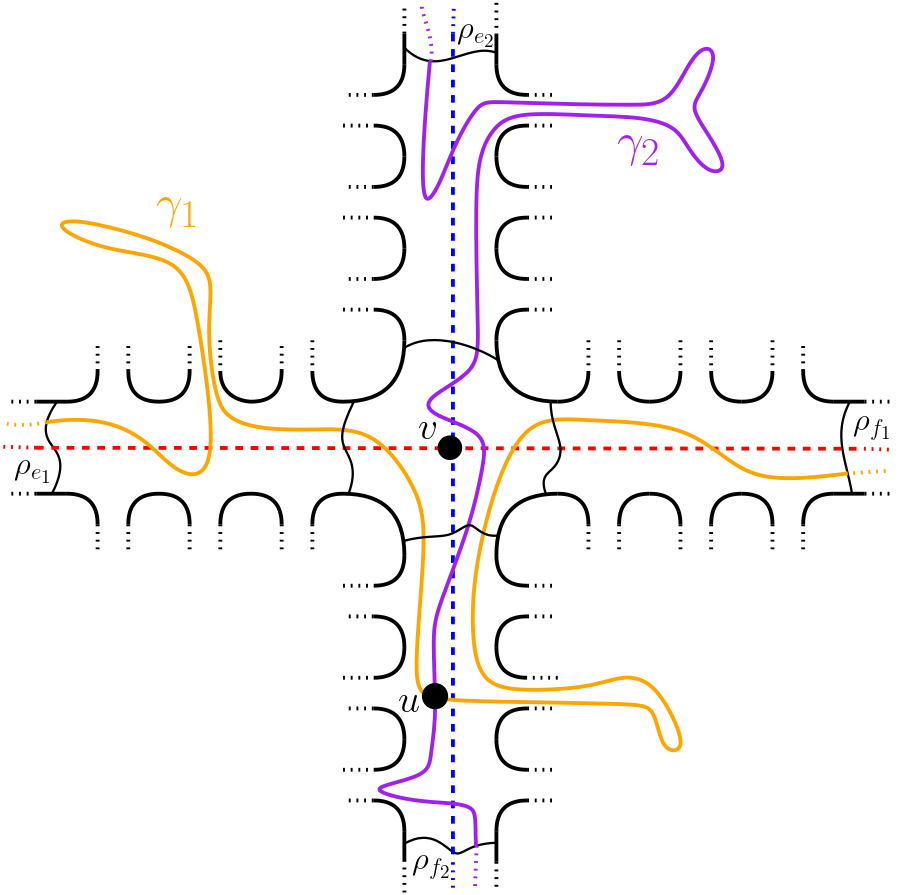}
    \caption{An illustration of $\gamma_1$ (orange) and $\gamma_2$ (purple) contained within $D^*$.
    They intersect at a point $z$ coinciding with a vertex $u$ of the closed walk $C^*_1$ with $p_{C_1}(u)=v$.
    }
    \label{fig3}
\end{figure}

    For each $C_i$, we have that the length of the closed walk $C^*_i$ is strictly less than $\frac{1}{2}|V(C)|$, and so it follows by
    pigeonhole principle that there exists some vertex $v$ of $F$ such that $p_{C_i}(u)\not= v$ for both cycles $C_i$ of the Eulerian decomposition that contains $v$ and every $u$ along the closed walk $C_i^*$.
    Without loss of generality, we may assume that the two cycles of the Eulerian decomposition containing $v$ are $C_1$ and $C_2$.
    To obtain our desired contradiction, we will show that since $C_1^*$ and $C_2^*$ can only cross in $\Sigma$ by sharing common vertices or edges, there must in fact be some vertex $u$ along the closed walk $C_i^*$ for some $i\in \{1,2\}$ such that $p_{C_i}(u)=v$.
    The vertex $u$ will be a vertex of both $C_1^*$ and $C_2^*$ (see \cref{fig3}).

    For every $1\le j \le \ell_1$, since the distance between $c_{1.j}$ and $c_{1,j+1}$ along $C_1$ in $\Sigma$ is at most 1, it follows that the distance between $c_{1.j}$ and $c_{1,j+1}$ in the graph $C_1$ (and also in $G^{(1)}$) is at most $1/\epsilon = 33M^2$.
    So, we can choose vertices $c_{1,s_1}$, $c_{1,t_1}$ at distance between $400M^5 + (400M^5 +1) = 800M^5 +1$ and $800M^5 + 1 + 33M^2 \le 840M^5$ from $v$ in $C_1$ and so that the distance between $c_{1,s_1}$ and $c_{1,t_1}$ in $C_1$ is at least $1600M^5$.
    Since $F$ has girth at least $3400M^5$, we may assume without loss of generality that $s_1=1$ and $t_1\le 1700M^5$.
    Let $Q_1$ be the subpath of $C_1$ between $c_{1,1}$ and $c_{1,t_1}$ that contains $v$.
    Let $P_1^*$ be the subwalk of $C^*_1$ obtained by concatenating the walks given by the paths $P_{1,1},\ldots , P_{1,t_1-1}$ in order.
    So, $P_1^*$ is a walk between $c_{1,1}^*$ and $c_{1,t_1}^*$ in $G^{(1)}$,
    and is also contained in $D(N_F^{400M^5}[V(Q_1)])$.
    Let $e_1$ be the edge of $Q_1$ between $v$ and $c_{1,1}$ and at distance $\lfloor 400M^5 \rfloor$ from $v$ in $C_1$, and similarly, let $f_1$ be the edge of $Q_1$ between $v$ and $c_{1,t_1}$ and at distance $\lfloor 400M^5 \rfloor$ from $v$ in $C_1$.
    Let $Q_1'$ be the subpath of $Q_1$ between $e_1$ and $f_1$.
    As $c_{1,1}^*$ is contained in $D(N_F^{400M^5}[c_{1,1}])$ and $c_{1,t_1}^*$ is contained in $D(N_F^{400M^5}[c_{1,t_1}])$, it follows that the embedding of $P_1^*$ in $\Sigma$ contains a curve $\gamma_1$ contained in $D(N_{F-\{e_1,f_1\}}^{400M^5}[Q_1'])$ that starts on $\rho_{e_1}$ (which is a curve contained in the boundary of $D(N_{F-\{e_1,f_1\}}^{400M^5}[Q_1'])$) and ends on $\rho_{f_1}$ (which again is a curve contained in the boundary of $D(N_{F-\{e_1,f_1\}}^{400M^5}[Q_1'])$).
    Similarly for $C_2$, we find a curve $\gamma_2$ contained in $D(N_{F-\{e_2,f_2\}}^{400M^5}[Q_2'])$ that starts on $\rho_{e_2}$ and ends on $\rho_{f_2}$.
    For an illustration of $\gamma_1$ and $\gamma_2$, see \cref{fig3}.

    Let $D^*= D(N_{F-\{e_1,f_1,e_2,f_2\}}^{800M^5}[v])$.
    Then since $F$ has girth greater than $1600M^5+1$, $D^*$ is homeomorphic to a disk and furthermore $\rho_{e_1},\rho_{f_1}, \rho_{e_2},\rho_{f_2}$ are disjoint curves contained in the boundary of $D^*$ (appearing in order) and for $i\in \{1,2\}$, $\gamma_i$ is a curve contained in $D^*$ between $\rho_{e_i}$ and $\rho_{f_i}$.
    By the Jordan curve theorem, $\gamma_1$ and $\gamma_2$ intersect at some point $z$, which is a common vertex along the walks $P_1^*$ and $P_2^*$.
    Without loss of generality, since $D^* \subseteq D(N^{800M^5}_{F-N_{C_1(v)}}[v]) \cup D(N^{800M^5}_{F-N_{C_1(v)}}[v])$ we may assume that $z \in D(N^{800M^5}_{F-N_{C_1(v)}}[v])$.
    Let $u$ be a vertex along the subwalk $P^*_1$ of $C_1^*$ that coincides with the point $z$ in its embedding in $\Sigma$ as in \cref{fig3}.
    Then $c_{1,u}\in V(Q_1)$, so the distance between $c_{1,u}$ and $v$ in $C_1$ is at most $840M^5$.
    Therefore $p_{C_1}(u) = v$, a contradiction.
\end{proof}

\cref{Rie:main2} now follows from \cref{Rie:main3} by for each pair of positive integers $M, A$ taking a Riemannian surface $\Sigma_{M,A}$ as in \cref{Rie:main3} of systole at least $K$ and joining them together with thick enough tubes (which can be attached somewhere far from $D$ as in the proof of \cref{Rie:main2}, so that this part of the resulting complete Riemannian surface still has no such $(M,A)$-quasi-isometry) and so that the systole remains at least $K$.
This can be done as before by taking a suitable triangulation of the resulting surface by a locally finite metric graph $H$ and adjusting the resulting piecewise Euclidean metric to a $(1,\epsilon)$-quasi-isometric complete Riemannian metric.

\section{Concluding remarks}\label{sec:conclude}

In \cite{davies2025string} we proved that complete Riemannian surfaces $\Sigma$ of genus at most $g$ admit triangulations $G\subset \Sigma$ such that $G^{(1)} \hookrightarrow \Sigma$ is a $( 10^6 , O(g) )$-quasi-isometry.
\cref{Rie:main3} shows that the dependence on $g$ is necessary.

One can obtain explicit bounds of the genus of the Riemannian surface constructed in \cref{Rie:main3}.
Taking $K=M=A\ge 1$ for simplicity, in the proof of \cref{Rie:main3} we start with a 4-regular graph $F$ of girth at least $3400M^6$ and create a surface in which $F$ has a 2-cell embedding. By Euler's formula it can easily be shown that the genus of the resulting surface is at most $|V(F)|$.
Since there are $4$-regular graphs with girth at least $g$ and at most $3^g$ vertices~\cite{lazebnik1997upper}, it follows that the surface can be taken to have genus at most $3^{3400M^6}$.

With more care, for fixed $K$ $M$, it can be shown that the resulting Riemannian surface has genus at most $3^{O(A^2)}$.
Thus, there are complete Riemannian surfaces $\Sigma$ of genus at most $g$ admitting no triangulation $G\subset \Sigma$ such that $G^{(1)} \hookrightarrow \Sigma$ is a $(10^6 , \Omega(\sqrt{\log g })$-quasi-isometry.
It would be interesting to obtain better upper and lower bounds for the constants of such a quasi-isometry depending on the genus $g$.
It also remains open whether such $(1,f(g))$-quasi-isometric simplicial triangulations exist.

\section*{Acknowledgements}

We thank 
Matija Bucić, 
Agelos Georgakopoulos, and
Federico Vigolo 
for helpful discussions and comments.

\bibliographystyle{amsplain}
{\small
\bibliography{QI}}

\end{document}